\newtheorem{theorem}{Theorem}
\newtheorem{lemma}[theorem]{Lemma}
\newtheorem{corollary}[theorem]{Corollary}
\newtheorem{example}[theorem]{Example}
\newtheorem{question}{Question}[section]
\newtheorem{definition}[theorem]{Definition}
\title[Strongly discrete subsets]{Strongly discrete subsets with Lindel\"of closures}
\author[A. Bella]{Angelo Bella}
\address{ Dipartimento di Matematica e Informatica, viale A. 
Doria 6, 95125 Catania, Italy}
\email{bella@dmi.unict.it}
\author[S. Spadaro]{Santi Spadaro}
\address{ Dipartimento di Matematica e Informatica, viale A. 
Doria 6, 95125 Catania, Italy}
\email{santidspadaro@gmail.com}
\thanks{The authors were partially
supported by a grant from INdAM-GNSAGA}
\dedicatory{Dedicated to the memory of Phil Zenor}
\subjclass[2010]{ 54A25, 54D20, 54D10}
\keywords{cardinality bounds, cardinal invariants, strongly discretely Lindel\"of, cellular-Lindel\"of, cellular-compact}
\begin{document}

\maketitle

\begin{abstract} 

We define a topological space to be an \emph{SDL space} if the closure of each one of its strongly discrete subsets is Lindel\"of. After distinguishing this property from the Lindel\"of property we make various remarks about cardinal invariants of SDL spaces. For example we prove that $|X| \leq 2^{\chi(X)}$ for every SDL Urysohn space and that every SDL $P$-space of character $\leq \omega_1$ is regular and has cardinality $\leq 2^{\omega_1}$. Finally, we exploit our results to obtain some partial answers to questions about the cardinality of cellular-Lindel\"of spaces.

\end{abstract}

\section{Introduction}

One of the most elegant characterizations of compactness is the following: a space $X$ is compact if and only if the closure of every discrete subset of $X$ is compact. Whether this is true for the Lindel\"of property is the subject of a well-known question of Arhangel'skii, and the class of \emph{strongly discretely Lindel\"of spaces}, that is, spaces where closures of discrete sets are Lindel\"of, has received a certain amount of interest in the last few decades (see, for example \cite{A2}, \cite{AB}).

Inspired in part by this question, various authors have investigated variants of the Lindel\"of property which involve discrete sets or related objects. For example, the authors of \cite{JTW} define a space $X$ to be \emph{almost discretely Lindel\"of} if for every discrete set $D \subset X$ there is a Lindel\"of subspace $Y \subset X$ such that $D \subset Y$ and in \cite{BS} and \cite{B} we define a space $X$ to be (\emph{strongly}) \emph{cellular-Lindel\"of} if for every family $\mathcal{U}$ of pairwise disjoint non-empty open subsets of $X$ there is a (closed) Lindel\"of subspace $Y \subset X$ such that $U \cap Y \neq \emptyset$ for every $U \in \mathcal{U}$.

We introduce a new variant of the strongly discretely Lindel\"of property. Recall that a set $D \subset X$ is said to be \emph{strongly discrete} if for every $x \in D$ there is an open neighbourhood $U_x \subset X$ of $x$ such that $\{U_x: x \in D \}$ is a pairwise disjoint family.

\begin{definition}
We say that a space $X$ is SDL if the closure of every strongly discrete subset of $X$ is Lindel\"of.
\end{definition}

It is clear that every SDL space is strongly cellular-Lindel\"of and the converse is true if every point of the space has a disjoint local $\pi$-base. This happens for instance for first countable spaces or P-spaces of character $\omega_1$. 

In view of Arhangel'skii's celebrated theorem on the cardinality of Lindel\"of first-countable spaces  \cite{A} it is natural to ask whether there are restrictions on the cardinality of first-countable spaces satisfying any of the above weakenings of the Lindel\"of property. Every strongly discretely Lindel\"of Hausdorff first-countable space has cardinality at most continuum (a much stronger result than that was proved by the second author in \cite{S}). Moreover in \cite{BS} we proved that it is consistent that every almost discretely Lindel\"of first-countable Hausdorff space has cardinality at most continuum (Juh\'asz, Soukup and Szentmikl\'ossy \cite{JSS} proved that this is true in ZFC for regular spaces).  However, the following question is still open.

\begin{question} \label{mainquest}
Is the cardinality of a (regular) cellular-Lindel\"of first-countable space at most continuum?
\end{question}

Various partial answers to the above question are known. For example, define a space $X$ to be \emph{cellular-compact} if for every family $\mathcal{U}$ of pairwise disjoint non-empty open subsets of $X$ there is a compact subspace $K$ of $X$ such that $U \cap K \neq \emptyset$, for every $U \in \mathcal{U}$. Tkachuk and Wilson \cite{TW} proved that every cellular compact first-countable regular space has cardinality at most continuum. That was later generalized by Juh\'asz, Soukup and Szentmikl\'ossy \cite{JSS2}, who proved that if $X$ is a Hausdorff cellular compact space of countable closed pseudocharacter and countable tightness, then $X$ has cardinality at most continuum. Neither one of these results generalizes to higher cardinals, so it is natural to pose the following question:

\begin{question} \label{cellcom} {\ \\}
\begin{enumerate}
\item Is it true that $|X| \leq 2^{\psi_c(X) \cdot t(X)}$, for every cellular compact Hausdorff space $X$?
\item Is it at least true that $|X| \leq 2^{\chi(X)}$, for every cellular compact (regular) Hausdorff space $X$?
\end{enumerate}
\end{question}

Recall that $wL_c(X)$ is defined as the minimum cardinal $\kappa$ such that, for every closed set $F \subset X$ and for every open cover $\mathcal{U}$ of $F$ there is a $\leq \kappa$-sized subcollection $\mathcal{V}$ of $\mathcal{U}$ such that $F \subset \overline{\bigcup \mathcal{V}}$. We will prove that $wL_c(X) \leq t(X)$ for every SDL space $X$ and exploit that to prove that $|X| \leq 2^{\chi(X)}$ for every SDL Urysohn space and that every SDL $P$-space of character $\leq \omega_1$ is regular and has cardinality $\leq 2^{\omega_1}$. Moreover, we will offer a partial answer to Question $\ref{cellcom}$ by proving that $|X| \leq 2^{\psi_c(X) \cdot t(X)}$, for every cellular-compact space $X$ with a dense subset of isolated points. We finish by proving that SDL spaces with a $G_\delta$ diagonal of rank 2 have cardinality at most continuum.

For undefined notions see \cite{En} and \cite{J}. For background about elementary submodels see \cite{Dow}.

\section{The main results}

It is still an open question whether there is a strongly discretely Lindel\"of non-Lindel\"of space, but an SDL non-Lindel\"of space can be readily produced.

\begin{example}
There is a non-Lindel\"of SDL Tychonoff space.
\end{example}

\begin{proof} 
Let $X=\Sigma(2^\kappa)=\{x \in 2^\kappa: |x^{-1}(1)| \leq \aleph_0 \}$ with the topology induced from $2^\kappa$. Then $X$ is a countably compact non-compact space and hence it can't be Lindel\"of. Being dense in $2^\kappa$, $X$ is ccc and hence every strongly discrete subset of $X$ is countable. Fix a strongly discrete set $D \subset X$. Since $D$ is countable and each point of $D$ has countable support, there is a homeomorphic copy $K \subset 2^\kappa$ of $2^\omega$ such that $D \subset K$. Therefore $\overline{D} \subset K$, and hence the closure of every strongly discrete subset of $X$ is even compact.
\end{proof}

The following technical result will be needed in the applications of the SDL property to cellular-Lindel\"of spaces.

\begin{lemma} \label{SDLemma}
Let $X$ be a strongly cellular-Lindel\"of space with a disjoint local $\pi$-base at every point. Then $X$ is an SDL space.
\end{lemma}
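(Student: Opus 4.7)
The plan is to show that $\overline{D}$ is contained in a closed Lindelöf subspace $Y$, from which the result follows since closed subspaces of Lindelöf spaces are Lindelöf. The closed Lindelöf $Y$ will be produced by applying the strong cellular-Lindelöf hypothesis to a cleverly chosen pairwise disjoint family of open sets, built from the local $\pi$-bases at points of $D$.

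More precisely, let $D \subset X$ be strongly discrete, witnessed by pairwise disjoint open neighborhoods $\{U_x : x \in D\}$ with $x \in U_x$. For each $x \in D$, fix a disjoint local $\pi$-base $\mathcal{B}_x$ at $x$ and refine it so that every $B \in \mathcal{B}_x$ satisfies $B \subset U_x$ (this is harmless: given any open $W \ni x$, the $\pi$-base property applied to $W \cap U_x$ yields some $B \in \mathcal{B}_x$ with $B \subset W \cap U_x \subset U_x$, so the subfamily of $\pi$-base members lying inside $U_x$ is still a $\pi$-base at $x$). Because the sets $U_x$ are pairwise disjoint, the family $\mathcal{V} := \bigcup_{x \in D} \mathcal{B}_x$ is a pairwise disjoint family of nonempty open subsets of $X$. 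Apply strong cellular-Lindelöfness to $\mathcal{V}$ to obtain a closed Lindelöf subspace $Y \subset X$ with $Y \cap B \neq \emptyset$ for every $B \in \mathcal{V}$.

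The key claim is that $\overline{D} \subset Y$. To see it, fix $p \in \overline{D}$ and let $V$ be any open neighborhood of $p$. Since $p \in \overline{D}$, pick $x \in V \cap D$. Then $V \cap U_x$ is an open neighborhood of $x$, so by the $\pi$-base property there exists $B \in \mathcal{B}_x$ with $B \subset V \cap U_x \subset V$. Since $B \cap Y \neq \emptyset$, we get $V \cap Y \neq \emptyset$. As $V$ was arbitrary, $p \in \overline{Y} = Y$.

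The proof is short and the only slightly delicate point is the refinement of each $\mathcal{B}_x$ into $U_x$; this is what guarantees that sets from different $\mathcal{B}_x$'s are disjoint, so that $\mathcal{V}$ really is a disjoint family to which the strong cellular-Lindelöf hypothesis can be applied. Once $\overline{D} \subset Y$, the conclusion is immediate: $\overline{D}$ is closed in $X$, hence closed in the Lindelöf space $Y$, so $\overline{D}$ is Lindelöf, completing the proof that $X$ is SDL.
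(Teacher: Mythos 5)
Your proof is correct and follows essentially the same route as the paper: refine each disjoint local $\pi$-base into the disjoint expansion $\{U_x\}$, apply strong cellular-Lindel\"ofness to the resulting cellular family, and observe that the closed Lindel\"of set it produces must contain $\overline{D}$. The only cosmetic difference is that you verify $\overline{D}\subset Y$ directly at each point of $\overline{D}$, whereas the paper first shows $D\subset L$ and then uses closedness of $L$; both are valid.
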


\begin{proof}
Let $D \subset X$ be a strongly discrete set and let $\{U_x: x \in D \}$ be a pairwise disjoint open expansion of $D$. For every $x \in D$, let $\mathcal{U}_x$ be a disjoint local $\pi$-base at $x$. We may assume that every element of $\mathcal{U}_x$ is contained in $U_x$. Then $\mathcal{U}=\bigcup \{\mathcal{U}_x: x \in D\}$ is a cellular family of subsets of $X$ and hence there is a closed Lindel\"of subspace $L$ of $X$ such that $L \cap U \neq \emptyset$, for every $U \in \mathcal{U}$. But then every point of $D$ is an accumulation point of $L$ and therefore, since $L$ is closed, $D \subset L$, which implies that $\overline{D}$ is Lindel\"of.
\end{proof}

\begin{lemma} \label{FDLemma}
If $X$ is a Hausdorff first-countable space or a Hausdorff $P$-space of character $\leq \omega_1$ or a space with a dense set of isolated points then $X$ has a disjoint local $\pi$-base at every point.
\end{lemma}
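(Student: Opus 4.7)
I will prove the three cases separately, with the last two sharing a common recursive strategy.

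The case of a space with a dense set $D$ of isolated points is immediate: the family $\{\{d\} : d \in D\}$ is a pairwise disjoint family of non-empty open sets, and since $D$ is dense, every non-empty open set meets $D$, so this single family serves as a disjoint local $\pi$-base at \emph{every} point of $X$.

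For the first-countable Hausdorff case, fix $x \in X$. If $x$ is isolated, then $\{\{x\}\}$ works. Otherwise, fix a decreasing countable local base $\{V_n : n < \omega\}$ at $x$ and construct non-empty open sets $W_n$ together with open neighbourhoods $U_n$ of $x$ by recursion on $n$, maintaining the invariants $W_n \subseteq V_n \cap \bigcap_{k < n} U_k$ and $W_n \cap U_n = \emptyset$. At each stage the set $V_n \cap \bigcap_{k < n} U_k$ is an open neighbourhood of $x$ (finite intersections of open sets are open), hence contains some $y_n \neq x$, and Hausdorffness yields the required $W_n$ and $U_n$ separating $y_n$ from $x$. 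Pairwise disjointness of the $W_n$ follows automatically: for $m < n$, $W_n \subseteq U_m$ while $W_m \cap U_m = \emptyset$. Since $W_n \subseteq V_n$ and $\{V_n\}$ is a local base, $\{W_n : n < \omega\}$ is a local $\pi$-base at $x$.

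The $P$-space case of character $\leq \omega_1$ is an exact copy of the above argument with $\omega$ replaced by $\omega_1$. Again we may assume $x$ is non-isolated and fix a decreasing local base $\{V_\alpha : \alpha < \omega_1\}$ at $x$ (decreasingness is harmless because countable intersections of open sets are open in a $P$-space). Then we build $W_\alpha$ and $U_\alpha$ by transfinite recursion, requiring $W_\alpha \subseteq V_\alpha \cap \bigcap_{\beta < \alpha} U_\beta$ with $W_\alpha$ non-empty, and $x \in U_\alpha$ with $U_\alpha \cap W_\alpha = \emptyset$. The only non-routine point, and the place where both hypotheses get used, is verifying at stage $\alpha < \omega_1$ that $V_\alpha \cap \bigcap_{\beta < \alpha} U_\beta$ is still an open neighbourhood of $x$; this is exactly where the $P$-space property enters, since $|\alpha| \leq \aleph_0$. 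From there, Hausdorffness separates $x$ from any chosen $y_\alpha \neq x$ in this neighbourhood and gives $W_\alpha$ and $U_\alpha$. Disjointness of $\{W_\alpha : \alpha < \omega_1\}$ follows as before.

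The main (mild) obstacle is really just bookkeeping: ensuring that the auxiliary neighbourhoods $U_\beta$ of $x$ accumulated so far still intersect to an open set at each stage — which is why the character bound is tied exactly to the degree of intersection-closure available ($\omega$ for first-countable, $\omega_1$ for $P$-spaces). No extra argument is needed beyond Hausdorff separation and this closure property.
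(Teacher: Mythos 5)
Your proof is correct and follows essentially the same strategy as the paper's: the isolated-points case is handled identically, and in the other two cases both arguments recursively peel off pairwise disjoint non-empty open pieces from a decreasing local base of length $\omega$ (resp.\ $\omega_1$), using Hausdorffness to separate the non-isolated point $x$ from some other point of each basic neighbourhood, and the $P$-space property to keep the relevant countable intersections open. The only difference is bookkeeping: the paper sets $V_k = U_k \setminus \overline{U_{n(k)}}$ (which, as literally written, requires passing to a suitable subsequence to guarantee pairwise disjointness), whereas you carry along an auxiliary decreasing chain of separating neighbourhoods $U_\beta$ of $x$ and obtain disjointness of the $W_\alpha$ directly from $W_\alpha \subseteq \bigcap_{\beta<\alpha} U_\beta$ and $W_\beta \cap U_\beta = \emptyset$ --- a slightly more careful account of the same idea.
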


\begin{proof}
Assume $X$ is a Hausdorff first-countable space. Fix a point $x \in X$. If $x$ is an isolated point there is nothing to prove, otherwise every open neighbourhood of $x$ contains at least another point. Since $X$ is first-countable, we can fix a countable decreasing local base $\{U_k: k < \omega \}$ at $x$. Let $U$ be any open neighbourhood of $X$. We claim that there is $n<\omega$ such that $U \nsubseteq \overline{U_n}$. Indeed, let $y \in U$ be a point distinct from $x$. Then there is $n<\omega$ and an open neighbourhood $V \subset U$ of $y$ such that $U_n \cap V=\emptyset$. But then $U \nsubseteq \overline{U_n}$, as, we wanted. Hence, for every $k<\omega$ we can find $n(k) < \omega$ such that $U_k \nsubseteq \overline{U_{n(k)}}$. Define $V_k=U_k \setminus \overline{U_{n(k)}}$, for every $k<\omega$. Then $\{V_k: k < \omega \}$ is a disjoint local $\pi$-base at $x$. The proof when $X$ is a regular $P$-space of character $\leq \omega_1$ is similar, since, in this case, $X$ has a decreasing local base of length $\omega_1$ at every point. Finally, if $X$ has a dense set $D$ of isolated points then $\{\{x\}: x \in D \}$ is even a disjoint $\pi$-base for the whole space.
\end{proof}

\begin{lemma} \label{mainlemma}
Let $X$ be an SDL space. Then $wL_c(X) \leq t(X)$.
\end{lemma}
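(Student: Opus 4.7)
The plan is to argue by contradiction. Let $\kappa = t(X)$, and suppose there is a closed $F \subset X$ together with an open cover $\mathcal{U}$ of $F$ such that $F \not\subseteq \overline{\bigcup \mathcal{V}}$ for every $\mathcal{V} \in [\mathcal{U}]^{\leq \kappa}$. I will recursively build, for $\alpha < \kappa^+$, points $x_\alpha \in F$, open neighbourhoods $V_\alpha \ni x_\alpha$, elements $U_{x_\alpha} \in \mathcal{U}$ with $V_\alpha \subset U_{x_\alpha}$, and countable subfamilies $\mathcal{W}_\alpha \subset \mathcal{U}$, chosen so that $\{x_\alpha : \alpha < \kappa^+\}$ is strongly discrete and each initial segment is managed by SDL.

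At stage $\alpha$, by the inductive hypothesis $D_\alpha := \{x_\beta : \beta < \alpha\}$ is strongly discrete, so SDL yields $\overline{D_\alpha}$ Lindel\"of; since $\overline{D_\alpha} \subset F$, fix a countable $\mathcal{W}_\alpha \subset \mathcal{U}$ covering $\overline{D_\alpha}$. Set $\mathcal{V}_\alpha := \bigcup_{\beta < \alpha}(\{U_{x_\beta}\} \cup \mathcal{W}_\beta)$, of cardinality at most $\kappa$; by the contradiction hypothesis pick $x_\alpha \in F \setminus \overline{\bigcup \mathcal{V}_\alpha}$, then some $U_{x_\alpha} \in \mathcal{U}$ containing $x_\alpha$ and an open $V_\alpha \subset U_{x_\alpha}$ with $x_\alpha \in V_\alpha$ and $V_\alpha \cap \overline{\bigcup \mathcal{V}_\alpha} = \emptyset$. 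Because $V_\beta \subset U_{x_\beta} \in \mathcal{V}_\alpha$ for every $\beta < \alpha$, we automatically have $V_\alpha \cap V_\beta = \emptyset$, so $D_{\alpha+1}$ is again strongly discrete and the recursion continues.

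Assume the recursion reaches stage $\kappa^+$, and let $D := \{x_\alpha : \alpha < \kappa^+\}$. By SDL, $\overline{D}$ is Lindel\"of. From $t(X) \leq \kappa$ and the regularity of $\kappa^+$ it follows that $\overline{D} = \bigcup_{\alpha < \kappa^+} \overline{D_\alpha}$: any $y \in \overline{D}$ lies in $\overline{B}$ for some $B \subset D$ of size at most $\kappa$, and $B$ is contained in $D_{\alpha_0}$ for some $\alpha_0 < \kappa^+$. Since each $\overline{D_\alpha}$ is covered by $\mathcal{W}_\alpha$, the family $\mathcal{F} := \bigcup_{\alpha < \kappa^+}(\{U_{x_\alpha}\} \cup \mathcal{W}_\alpha)$ is an open cover of $\overline{D}$; extract a countable subcover $\{W_i : i < \omega\} \subset \mathcal{F}$ and note that each $W_i \in \{U_{x_{\alpha_i}}\} \cup \mathcal{W}_{\alpha_i}$ for some $\alpha_i < \kappa^+$. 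Setting $\alpha^* := \sup_i \alpha_i < \kappa^+$ (countable supremum in a regular cardinal), one gets $\overline{D} \subset \bigcup \mathcal{V}_{\alpha^*+1}$; but $x_{\alpha^*+1} \in D \subset \overline{D}$ was chosen outside $\overline{\bigcup \mathcal{V}_{\alpha^*+1}}$, a contradiction. The main obstacle -- and the reason for folding the auxiliary covers $\mathcal{W}_\alpha$ into the recursion rather than working only with the single $U_{x_\beta}$'s -- is precisely this final absorption step: without the $\mathcal{W}_\alpha$'s there is no control on whether a countable Lindel\"of subcover of $\overline{D}$ by members of $\mathcal{U}$ can be pulled back into a single $\mathcal{V}_{\alpha^*+1}$ indexed by a bounded stage.
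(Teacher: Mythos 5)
Your proof is correct and follows essentially the same route as the paper's: the same $\kappa^+$-length recursion that picks $x_\alpha \in F$ outside the closure of the union of a $\leq\kappa$-sized subfamily of $\mathcal{U}$, with pairwise disjoint open expansions ensuring strong discreteness and countable subfamilies covering the Lindel\"of closures of the initial segments. The only real difference is in the last step: where the paper notes that $D$ is a free sequence of length $\kappa^+$ inside the Lindel\"of space $\overline{D}$ of tightness $\leq \kappa$ and invokes the standard bound on free sequences, you re-derive that fact inline via the tightness decomposition $\overline{D}=\bigcup_{\alpha<\kappa^+}\overline{D_\alpha}$ and the countable-subcover absorption argument, which is a correct (and self-contained) substitute.
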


\begin{proof} Let $F$ be a closed subset of $X$ and let $\mathcal{U}$ be  a collection of open subsets of $X$  such that $F \subseteq \bigcup \mathcal{U}$. 

Choose $x_0 \in F$ and take any $W_0=U_0 \in \mathcal{U}$ such that $x_0\in W_0$. Proceeding by induction, we define for each $\alpha <\kappa ^+$ points $x_\alpha
\in F$, open sets $W_\alpha \subseteq U_\alpha  \in \mathcal  U $
with
$x_\alpha \in W_\alpha $ and countable families  $\mathcal   V_\alpha\subseteq \mathcal  U$  in such a
way that the following conditions are satisfied.

\begin{enumerate}
\item  $\overline {\{x_\beta:\beta<\alpha \}} \subseteq \bigcup \mathcal  V_\alpha $;

\item $W_\alpha \cap (\bigcup(\{W_\beta:\beta<\alpha \}\cup  \bigcup \{\mathcal  V_\beta:\beta<\alpha \}))=\emptyset$
\end{enumerate}

Fix $\alpha < \kappa ^+$ and assume we have already defined 
$\{x_\beta:\beta<\alpha \}$, $\{W_\beta \subseteq U_\beta
:\beta<\alpha \}$   and
$\{\mathcal  V_\beta:\beta<\alpha \}$.  

If $F \subseteq 
\overline {\bigcup (\{W_\beta:\beta<\alpha \}\cup
\bigcup\{\mathcal 
V_\beta:\beta<\alpha \})}$ we stop because   $\mathcal 
V=\{U_\beta:\beta<\alpha \}\cup \bigcup\{\mathcal 
V_\beta:\beta<\alpha \}$ is a subfamily of $\mathcal 
U$ of cardinality not exceeding $\kappa $ 
satisfying $F\subseteq \overline {\bigcup\mathcal  V}$.

 If not,  
we may pick a point $x_\alpha \in  F$,  an open  set $W_\alpha$
and an element $U_\alpha  \in \mathcal  U
$ in such a way that $x_\alpha \in W_\alpha \subseteq U_\alpha $
and  
$W_\alpha \cap
(\bigcup(\{W_\beta:\beta<\alpha \}\cup \bigcup\{\mathcal 
V_\beta:\beta<\alpha \}))=\emptyset $. Finally, as the set     
$\{x_\beta:\beta<\alpha \}$ is strongly discrete, 
the set $\overline {\{x_\beta:\beta<\alpha \}}$ is Lindel\"of. 
Since $\overline {\{x_\beta:\beta<\alpha
\}}\subseteq F\subseteq \bigcup\mathcal  U$, there exists a
countable
family $\mathcal  V_\alpha
\subseteq \mathcal  U$ such that $\overline
{\{x_\beta:\beta<\alpha
\}}\subseteq \bigcup\mathcal  V_\alpha $.

Eventually the set $D=\{x_\alpha
:\alpha\in  \kappa ^+\}$ turns out to be a free sequence because
for
each $\alpha $ we have $\overline {\{x_\beta:\beta<\alpha
\}}\subseteq \bigcup \mathcal  V_\alpha $ and  $(\bigcup \mathcal
V_\alpha) \cap
\{x_\beta:\alpha \le \beta< \kappa ^+\}=\emptyset $.

The set $D$ is also strongly discrete and so it has a Lindel\"of closure. But a Lindel\"of space of tightness $\leq \kappa$ cannot have free sequences of cardinality $ \geq \kappa ^+$ and hence we reach a contradiction. Hence there is a subfamily $\mathcal  V\subseteq \mathcal  U$ of cardinality not exceeding $\kappa$ such that $F\subseteq \overline {\bigcup \mathcal{V}}$, as we wanted.

\end{proof}

Recall that a set $S \subset X$ is said to be \emph{$\theta$-dense} if for every non-empty open subset $O \subset X$ we have $\overline{O} \cap S \neq \emptyset$. We denote with $d_\theta(X)$ the smallest cardinality of a $\theta$-dense subset of $X$. The referee noted that the following lemma was proved by N. Carlson as Corollary 2.6 of \cite{C}, by a completely different argument. We include our proof of it anyway for the reader's convenience.
 
\begin{lemma} 
If $X$ is a  $T_2$ space, then $d_\theta(X) \le 2^{wL_c(X)\chi(X)}$.
\end{lemma}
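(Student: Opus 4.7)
The plan is to use the standard Pol--Šapirovskii elementary submodel technique to produce a $\theta$-dense set of the desired size. Set $\kappa := wL_c(X)\chi(X)$. Fix a regular cardinal $\mu$ with $X, \tau \in H(\mu)$, and take an elementary submodel $M \prec H(\mu)$ with $X, \tau \in M$, $\kappa + 1 \subseteq M$, $|M| = 2^\kappa$, and $[M]^{\leq \kappa} \subseteq M$. Such an $M$ is built as the union of a continuous $\kappa^+$-chain of elementary submodels, with each link of size $2^\kappa$. Put $D := X \cap M$, so $|D| \leq 2^\kappa$. The heart of the argument is the claim that $D$ is $\theta$-dense in $X$.

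I would argue by contradiction: suppose there is a non-empty open $U \subseteq X$ with $\overline{U} \cap D = \emptyset$. For every $y \in D$, fix inside $M$ a local base $\mathcal{B}_y$ at $y$ of cardinality at most $\chi(X) \leq \kappa$; by the closure of $M$ under $\kappa$-sequences, $\mathcal{B}_y \subseteq M$. Since $y \notin \overline{U}$, pick $B_y \in \mathcal{B}_y$ with $y \in B_y$ and $B_y \cap U = \emptyset$; note $B_y \in M$. Let $\mathcal{W} := \{B \in \tau \cap M : B \cap U = \emptyset\} \subseteq M$. Then $\mathcal{W}$ covers $D$, and since $U$ is open, $B \cap U = \emptyset$ forces $\overline{B} \cap U = \emptyset$, so $\overline{\bigcup \mathcal{W}} \cap U = \emptyset$.

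The crucial step is to use the hypothesis $wL_c(X) \leq \kappa$ to extract from $\mathcal{W}$ a subfamily $\mathcal{V}$ of cardinality at most $\kappa$ with $D \subseteq \overline{\bigcup \mathcal{V}}$. Granting this, $\mathcal{V} \in M$ by $\kappa$-closure, so $\overline{\bigcup \mathcal{V}} \in M$. The set $O := X \setminus \overline{\bigcup \mathcal{V}} \in M$ is open and non-empty, since $U \subseteq O$. Elementarity then produces $z \in O \cap M$, which lies in $D$; but $D \subseteq \overline{\bigcup \mathcal{V}}$ forces $z \in \overline{\bigcup \mathcal{V}}$, a contradiction.

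The main obstacle is justifying the application of $wL_c$: a priori $\mathcal{W}$ covers only $D$, not a closed set, and any boundary points of $U$ lying in $\overline{D}$ cannot be placed inside any member of $\mathcal{W}$. The fix is to use Hausdorffness together with elementarity to adjoin to $\mathcal{W}$ suitable basic neighbourhoods in $M$ covering the offending boundary points, apply $wL_c$ to the enlarged cover of a closed set $F \supseteq D$, and then check that the extracted subfamily may be taken within $\mathcal{W}$ itself so that $\overline{\bigcup \mathcal{V}} \cap U = \emptyset$ survives. This separation-plus-reflection step is the technical core of Carlson's proof in \cite{C}.
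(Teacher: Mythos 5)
You have correctly set up the Pol--\v{S}apirovski\u{\i} closing-off argument and correctly identified the one real difficulty: $wL_c$ is defined for open covers of \emph{closed} sets, while your family $\mathcal{W}$ is only known to cover $D = X \cap M$. But your proposed resolution --- adjoin neighbourhoods of the boundary points of $D$, apply $wL_c$ to a closed $F \supseteq D$, and then ``check that the extracted subfamily may be taken within $\mathcal{W}$ itself'' --- is not carried out, and as stated it does not obviously work: if the $\kappa$-sized subfamily $\mathcal{V}$ extracted from the enlarged cover contains some of the adjoined neighbourhoods (which may meet $U$), there is no reason that $D \subseteq \overline{\bigcup(\mathcal{V} \cap \mathcal{W})}$, so the disjointness from $U$ that drives the contradiction can be lost. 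Deferring this step to ``Carlson's proof'' leaves the technical core of the argument unproved.

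The paper closes this gap differently, and more cleanly: it first proves that $X \cap M$ is itself closed in $X$. Since $X$ is Hausdorff with $\chi(X) \leq \kappa$, every point $x$ satisfies $\{x\} = \bigcap_{\alpha<\kappa} \overline{U_\alpha}$ for suitable neighbourhoods $U_\alpha$ of $x$; if $x \in \overline{X \cap M}$, pick $Y \subseteq X \cap M$ of size $\leq \kappa$ with $x \in \overline{Y}$, note $\{x\} = \bigcap_\alpha \overline{Y \cap U_\alpha}$, and use $\kappa$-closure of $M$ to see that each $\overline{Y \cap U_\alpha}$, and hence $x$ itself, lies in $M$. With $X \cap M$ closed, $wL_c(X) \leq \kappa$ applies directly to your cover $\mathcal{W}$, and the rest of your argument (extracting a $\kappa$-sized $\mathcal{V} \in M$ and deriving a contradiction by elementarity) goes through essentially as you wrote it. So the missing ingredient is precisely this closedness claim; you should prove it rather than attempt to repair the cover.
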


\begin{proof}
Let $\kappa=wL_c(X) \cdot \chi(X)$. Let $\theta$ be a large enough regular cardinal and $M$ be a $\kappa$-closed elementary submodel of $H(\theta)$ such that $X \in M$, $\kappa+1 \subset M$ and $|M|=2^\kappa$.

\medskip

\noindent {\bf Claim 1.} $X \cap M$ is closed in $X$.

\begin{proof}[Proof of Claim 1]
Let $x \in \overline{X \cap M}$. Since $\chi(X) \leq \kappa$ we can fix a family $\{U_\alpha: \alpha < \kappa \}$ of open neighbourhoods of $x$ such that $\bigcap \overline{U_\alpha}=\{x\}$ and a set $Y \subset X \cap M$ having cardinality $\leq \kappa$ such that $x \in \overline{Y}$. Then $\{x\}=\bigcap_{\alpha < \kappa} \overline{Y \cap U_\alpha}$. Since $M$ is $\kappa$-closed, $Y \cap U_\alpha \in M$ and hence $\overline{Y \cap U_\alpha} \in M$,  for every $\alpha < \kappa$. Using $\kappa$-closure of $M$ again we infer that $x \in X \cap M$, which proves that $X \cap M$ is closed.
\end{proof}

\noindent {\bf Claim 2.} $X \cap M$ is $\theta$-dense in $X$.

\begin{proof}[Proof of Claim 2]
Suppose by contradiction this is not the case and let $O \subset X$ be a non-empty open set such that $\overline{O} \cap X \cap M=\emptyset$. For every $x \in X \cap M$ we can fix a local base $\mathcal{B}_x \in M$ of cardinality $\kappa$ for $x$. Since $\kappa+1 \subset M$ we have $\mathcal{B}_x \subset M$. Hence, for every $x \in X \cap M$ we can find an open neighbourhood $U_x$ of $x$ such that $U_x \in M$ and $U_x \cap O=\emptyset$. The family $\{U_x: x \cap M \}$ is an open cover of $X \cap M$ and hence, using $wL_c(X) \leq \kappa$ we can find a $\kappa$-sized set $C \subset X \cap M$ such that $X \cap M \subset \overline{\bigcup \{U_x: x \in C\}}$. Since $M$ is $\kappa$-closed, $C \in M$ and hence $M \models X \subset \overline{\bigcup \{U_x: x \in C\}}$. By elementarity it follows then that $H(\theta) \models X \subset \overline{\bigcup \{U_x: x \in C\}}$, which is a contradiction since $O \cap U_x=\emptyset$, for every $x \in C$.
\end{proof}
\end{proof}

The following lemma is proved by a standard argument, similar to the proof that $|X| \leq d(X)^{\chi(X)}$ for every Hausdorff space $X$.

\begin{lemma}   
If $X$ is a Urysohn space, then $|X| \leq d_\theta(X)^{\chi(X)}$. 
\end{lemma}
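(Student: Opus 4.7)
The plan is to mimic the standard proof that $|X| \leq d(X)^{\chi(X)}$ for Hausdorff $X$, replacing the dense set by the $\theta$-dense one and open neighborhoods by their closures. Fix a $\theta$-dense $S \subseteq X$ with $|S| = \lambda := d_\theta(X)$ and set $\kappa := \chi(X)$. For each $x \in X$, I would fix a local base $\mathcal{V}_x$ of cardinality $\leq \kappa$ closed under finite intersections, viewed as a directed set under reverse inclusion. By $\theta$-density, $\overline{V} \cap S \neq \emptyset$ for every $V \in \mathcal{V}_x$, so a selector $\sigma_x : \mathcal{V}_x \to S$ with $\sigma_x(V) \in \overline{V} \cap S$ is available. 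I then encode $x$ by the isomorphism type of the labeled directed set $(\mathcal{V}_x, \sigma_x)$.

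A direct count gives at most $2^\kappa$ directed sets of size $\leq \kappa$ up to isomorphism, and each carries at most $\lambda^\kappa$ labelings in $S$, so the total number of encodings is at most $2^\kappa \cdot \lambda^\kappa = \lambda^\kappa$. It therefore suffices to show that the encoding is injective.

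The main obstacle --- and the place where Urysohn (rather than merely Hausdorff) enters --- is precisely this injectivity. Assume $x \neq y$ and, for contradiction, that $\phi : \mathcal{V}_x \to \mathcal{V}_y$ is an order-isomorphism with $\sigma_x = \sigma_y \circ \phi$. By Urysohn pick open $U \ni x$ and $W \ni y$ with $\overline{U} \cap \overline{W} = \emptyset$; choose $V_1 \in \mathcal{V}_x$ with $V_1 \subseteq U$ and $V_2 \in \mathcal{V}_y$ with $V_2 \subseteq W$; and let $V_0 = V_1 \cap \phi^{-1}(V_2) \in \mathcal{V}_x$. Then $V_0 \subseteq V_1 \subseteq U$ forces $\sigma_x(V_0) \in \overline{V_0} \subseteq \overline{U}$, while $\phi(V_0) \subseteq V_2 \subseteq W$ (using that $\phi$ preserves $\subseteq$) gives $\sigma_y(\phi(V_0)) \in \overline{\phi(V_0)} \subseteq \overline{W}$. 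The assumed equality $\sigma_x(V_0) = \sigma_y(\phi(V_0))$ then contradicts $\overline{U} \cap \overline{W} = \emptyset$. In the Hausdorff analogue $\sigma$ lands in $V$ itself, so open disjointness suffices; here $\sigma$ only lands in $\overline{V}$, so one must separate by neighborhoods whose closures are disjoint --- exactly the Urysohn property.
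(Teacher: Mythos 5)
Your argument is correct and is precisely the standard Pospíšil-style net argument that the paper invokes without writing out (the paper only remarks that the lemma ``is proved by a standard argument, similar to the proof that $|X| \leq d(X)^{\chi(X)}$''): you encode each point by a $\theta$-convergent net from the $\theta$-dense set indexed by a $\le\chi(X)$-sized directed set, count the isomorphism types, and use the Urysohn property to show such a net determines its limit. The only (trivial) loose end is the degenerate case $d_\theta(X)\le 1$, where the identity $2^\kappa\cdot\lambda^\kappa=\lambda^\kappa$ fails but $X$ is then a single point, so the bound holds anyway.
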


\begin{corollary} \label{maincor}
If $X$ is a Urysohn SDL space, then $|X| \leq 2^{\chi(X)}$. 
\end{corollary}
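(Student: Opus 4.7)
The plan is to simply chain together the three lemmas that immediately precede the corollary, using the standard inequality $t(X) \leq \chi(X)$ along the way.

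Set $\kappa = \chi(X)$. First, I would note that since tightness is bounded by character, $t(X) \leq \kappa$. Applying Lemma \ref{mainlemma} (which uses the SDL hypothesis), I get
\[
wL_c(X) \leq t(X) \leq \kappa.
\]

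Next, since $X$ is Urysohn it is in particular $T_2$, so I can invoke the lemma giving $d_\theta(X) \leq 2^{wL_c(X)\cdot \chi(X)}$. Combining this with the previous bound yields
\[
d_\theta(X) \leq 2^{wL_c(X)\cdot \chi(X)} \leq 2^{\kappa\cdot\kappa} = 2^{\kappa}.
\]

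Finally, since $X$ is Urysohn, the lemma $|X| \leq d_\theta(X)^{\chi(X)}$ applies, and I conclude
\[
|X| \leq d_\theta(X)^{\chi(X)} \leq (2^{\kappa})^{\kappa} = 2^{\kappa} = 2^{\chi(X)},
\]
as required. There is no genuine obstacle here: all the work has already been done in the preceding lemmas, and the corollary is a routine cardinal arithmetic assembly. The only thing worth emphasizing in the writeup is the one non-trivial substitution, namely the use of $t(X) \leq \chi(X)$ to pass from the tightness bound on $wL_c(X)$ to a character bound, which is what allows everything to collapse to $2^{\chi(X)}$.
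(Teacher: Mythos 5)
Your proposal is correct and follows exactly the chain of lemmas the paper intends: Lemma \ref{mainlemma} to bound $wL_c(X)$ by $t(X) \leq \chi(X)$, then the $\theta$-density bound $d_\theta(X) \leq 2^{wL_c(X)\chi(X)}$, then $|X| \leq d_\theta(X)^{\chi(X)}$ for Urysohn spaces. This is precisely the paper's (implicit) argument, so nothing further is needed.
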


The referee noted that the above result also follows by combining Alas's result that $|X| \leq 2^{wL_c(X) \cdot \chi(X)}$ (see \cite{Al}, Theorem 1) with Lemma $\ref{mainlemma}$. 

The next result is a partial answer to the  fundamental open question on the cardinality of a cellular-Lindel\"of first-countable space.

\begin{corollary} 
\cite{B} If $X$ is a strongly cellular-Lindel\"of first countable Urysohn space, then $|X| \leq \frak c$.
\end{corollary}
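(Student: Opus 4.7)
The plan is to chain together the three preceding results. Since $X$ is first-countable and Urysohn (hence Hausdorff), Lemma \ref{FDLemma} applies and gives me that $X$ has a disjoint local $\pi$-base at every point. This puts me in a position to invoke Lemma \ref{SDLemma}: combined with the hypothesis that $X$ is strongly cellular-Lindel\"of, I conclude that $X$ is an SDL space.

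Now $X$ is a Urysohn SDL space, so Corollary \ref{maincor} yields $|X| \leq 2^{\chi(X)}$. Since first countability means $\chi(X) \leq \omega$, this gives $|X| \leq 2^\omega = \mathfrak{c}$.

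There is no real obstacle here; the whole point of the preceding lemmas was precisely to set up this kind of reduction. The only thing to double-check is that Lemma \ref{FDLemma}'s first-countable case only needs the Hausdorff hypothesis (which is implied by Urysohn), and that the strongly cellular-Lindel\"of hypothesis matches exactly the hypothesis of Lemma \ref{SDLemma}; both are immediate from inspection.
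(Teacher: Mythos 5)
Your proposal is correct and follows exactly the paper's own proof: Lemma \ref{FDLemma} gives the disjoint local $\pi$-base, Lemma \ref{SDLemma} then yields the SDL property, and Corollary \ref{maincor} with $\chi(X)\leq\omega$ finishes the argument. No discrepancies.
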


\begin{proof}
It follows from Lemmas $\ref{SDLemma}$ and $\ref{FDLemma}$ that $X$ is an SDL space. Therefore the statement follows from Corollary $\ref{maincor}$.
\end{proof}

Two very natural questions remain open.

\begin{question}
Let $X$ be a Hausdorff SDL space. Is it true that $|X| \leq 2^{\chi(X)}$?
\end{question}

\begin{question}
Let $X$ be a strongly cellular-Lindel\"of first-countable Hausdorff space. Is it true that $|X| \leq \mathfrak{c}$?
\end{question}

\begin{lemma} \label{alaslemma2}
(\cite{Al}, Theorem 3) Let $X$ be a Hausdorff space with a dense set of isolated points. Then $|X| \leq 2^{\psi_c(X) \cdot t(X) \cdot wL_c(X)}$.
\end{lemma}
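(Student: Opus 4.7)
Write $\kappa = \psi_c(X) \cdot t(X) \cdot wL_c(X)$ and let $D$ be the dense set of isolated points. The plan is to run the elementary submodel argument of the $d_\theta$-lemma above, with the Hausdorff invariants $t$ and $\psi_c$ playing the role of $\chi$, and with the density of the isolated points together with $wL_c$ used at the end to deposit all of $D$ into the submodel. Take a $\kappa$-closed elementary submodel $M \prec H(\theta)$ with $X, D \in M$, $|M| = 2^\kappa$, and $2^\kappa + 1 \subseteq M$. The goal is to show $X \cap M = X$.

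The first step is to show $X \cap M$ is closed, as in Claim 1 of the $d_\theta$-lemma. Given $x \in \overline{X \cap M}$, use $t(X) \le \kappa$ to produce $A \subseteq X \cap M$ of size $\le \kappa$ with $x \in \overline A$; then $A \in M$ by $\kappa$-closure. A $\psi_c$-witness function $y \mapsto \{V^y_\alpha : \alpha < \kappa\}$ chosen inside $M$ induces an injection $y \mapsto (V^y_\alpha \cap A)_{\alpha < \kappa}$ from $\overline A$ into $(\mathcal{P}(A))^\kappa$, so $|\overline A| \le 2^\kappa$; by elementarity $\overline A \in M$, and then $\overline A \subseteq M$ since $2^\kappa + 1 \subseteq M$, forcing $x \in M$.

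The second and decisive step is to show $D \subseteq M$, so that $X = \overline D \subseteq \overline{X \cap M} = X \cap M$ by Step~1. Since $D \in M$, this amounts to $|D| \le 2^\kappa$; the crux is therefore the claim that a Hausdorff space with dense isolated points and $\psi_c \cdot t \cdot wL_c \le \kappa$ has at most $2^\kappa$ isolated points. To obtain this, I would apply $wL_c$ to the open cover $\{\{d\} : d \in D\} \cup \{V^y_\alpha : y \in X \setminus D, \alpha < \kappa\}$ of $X$: a $\le\kappa$-subfamily $\mathcal V$ with $\overline{\bigcup \mathcal V} = X$ forces $D \subseteq \bigcup \mathcal V$, and each member of $\mathcal V$ of the form $V^y_\alpha$ can be analysed further using $\bigcap_\alpha \overline{V^y_\alpha} = \{y\}$ (with $y$ non-isolated) to peel off the isolated points inside it.

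The main obstacle is precisely this inductive bounding of $|D|$: one pass of $wL_c$ indexes $\mathcal V$ by $\kappa$ but does not bound the number of isolated points in any single $V^y_\alpha \in \mathcal V$, so the iteration must be carried out carefully over $\kappa$ stages to reach $|D| \le 2^\kappa$. Closing this loop is where Alas's Theorem~3 in \cite{Al} does the real work, and once it is in hand the rest of the argument collapses into the elementary-submodel framework sketched above.
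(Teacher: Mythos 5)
There is a genuine gap, and you name it yourself: the ``decisive step'' of your argument --- bounding the set $D$ of isolated points by $2^\kappa$, or equivalently getting $D\subseteq M$ --- is exactly the content of the lemma, and you close it by appealing to Alas's Theorem~3, which \emph{is} the statement being proved. (The paper itself gives no proof here; it simply cites \cite{Al}, so an honest blind proof has to supply this step.) Your proposed route for it, applying $wL_c(X)\le\kappa$ once to the cover $\{\{d\}:d\in D\}\cup\{V^y_\alpha\}$ and then ``peeling off'' the isolated points inside each $V^y_\alpha$ over $\kappa$ stages, is not a proof: a single pass gives $\kappa$ many sets each of which may contain arbitrarily many isolated points, and you give no mechanism by which the iteration terminates or accumulates to the bound $2^\kappa$. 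Everything before that (the choice of $M$, and Step~1 showing $X\cap M$ is closed via $t(X)\le\kappa$, $\kappa$-closure, and the injection $y\mapsto (V^y_\alpha\cap A)_\alpha$ giving $|\overline A|\le 2^\kappa$) is correct.

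The step can in fact be closed inside your own framework, by mimicking Claim~2 of the paper's $d_\theta$-lemma rather than trying to count $D$ directly. Suppose some isolated point $d\in D$ lies outside $M$. For each $y\in X\cap M$ fix a $\psi_c$-family $\{V^y_\alpha:\alpha<\kappa\}\in M$ with $\bigcap_\alpha\overline{V^y_\alpha}=\{y\}$; since $\kappa+1\subseteq M$ each $V^y_\alpha\in M$, and since $d\ne y$ there is $\alpha$ with $d\notin\overline{V^y_\alpha}$, so we may pick $U_y\in M$ open with $y\in U_y$ and $d\notin U_y$. The family $\{U_y:y\in X\cap M\}$ covers the closed set $X\cap M$ (closed by your Step~1), so $wL_c(X)\le\kappa$ yields $C\subseteq X\cap M$ with $|C|\le\kappa$ and $X\cap M\subseteq\overline{W}$ where $W=\bigcup\{U_y:y\in C\}$; by $\kappa$-closure $C\in M$, hence $W\in M$, hence $M\models X\subseteq\overline{W}$, and by elementarity $X=\overline{W}$. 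But $d$ is isolated and $d\notin W$, so $d\notin\overline{W}$ --- a contradiction. Thus $D\subseteq M$, and then $X=\overline{D}\subseteq\overline{X\cap M}=X\cap M$ gives $|X|\le|M|=2^\kappa$. Note where the hypothesis of dense isolated points enters: for an isolated $d$, avoiding each $U_y$ already implies avoiding $\overline{\bigcup U_y}$, which is what makes the $wL_c$ (rather than Lindel\"of-degree) hypothesis usable.
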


Combining the above Lemma with Lemma $\ref{mainlemma}$ we obtain a partial answer to Question $\ref{cellcom}$.

\begin{theorem}
Let $X$ be a strongly cellular-Lindel\"of Hausdorff space with a dense set of isolated points. Then $|X| \leq 2^{\psi_c(X) \cdot t(X)}$.
\end{theorem}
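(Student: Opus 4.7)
The plan is to chain together the three main technical tools that the paper has already set up: Lemma \ref{FDLemma}, Lemma \ref{SDLemma}, Lemma \ref{mainlemma}, and Alas's cardinality bound (Lemma \ref{alaslemma2}). The whole proof essentially reduces to showing that the hypotheses of Alas's theorem are satisfied with $wL_c(X)$ bounded by $t(X)$.

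First I would verify that $X$ is SDL. By Lemma \ref{FDLemma}, any space with a dense set of isolated points has a disjoint local $\pi$-base at every point (indeed, the singletons of isolated points form a disjoint $\pi$-base for the whole space). Since $X$ is strongly cellular-Lindel\"of and has a disjoint local $\pi$-base at every point, Lemma \ref{SDLemma} applies and yields that $X$ is an SDL space.

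Second, once $X$ is known to be SDL, Lemma \ref{mainlemma} gives us $wL_c(X) \leq t(X)$. Plugging this into Alas's bound (Lemma \ref{alaslemma2}), which applies because $X$ is Hausdorff with a dense set of isolated points, gives
\[
|X| \leq 2^{\psi_c(X) \cdot t(X) \cdot wL_c(X)} \leq 2^{\psi_c(X) \cdot t(X) \cdot t(X)} = 2^{\psi_c(X) \cdot t(X)},
\]
which is the desired inequality.

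There is no real obstacle here beyond checking the hypotheses: all of the genuine work was already done in the previous lemmas (the induction producing a free sequence from a cover without a small dense-modulo-closure refinement in Lemma \ref{mainlemma}, and Alas's elementary submodel argument in Lemma \ref{alaslemma2}). So the proof is, in effect, just the assembly of these pieces, and can be written in a couple of lines.
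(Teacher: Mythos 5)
Your proposal is correct and is exactly the argument the paper intends: the theorem is stated immediately after Lemma \ref{alaslemma2} with the remark that it follows by combining that lemma with Lemma \ref{mainlemma}, the SDL hypothesis of which is supplied, just as you do, by Lemmas \ref{FDLemma} and \ref{SDLemma}. Nothing is missing.
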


\section{$P$-spaces and $G_\delta$ diagonals}

Recall that a $P$-space is a space where $G_\delta$ sets are open. We prove that Corollary $\ref{maincor}$ does not require the Urysohn separation axiom for $P$-spaces of character $\leq \omega_1$.

\begin{theorem}
Every Hausdorff SDL P-space of character at most $\omega_1$ is regular.
\end{theorem}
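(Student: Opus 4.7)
The plan is to argue by contradiction. Assume $X$ is not regular at some $x \in X$: there is a closed $F \not\ni x$ with $\overline{U} \cap F \neq \emptyset$ for every open $U \ni x$. I will construct, via transfinite recursion, a strongly discrete set $\{y_\alpha : \alpha < \omega_1\} \subseteq F$ whose Lindel\"of closure (granted by SDL) produces a contradiction.

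Since $X$ is a Hausdorff $P$-space of character at most $\omega_1$, each point admits a decreasing local base $\{W_\alpha : \alpha < \omega_1\}$; the $P$-space property is used to close the base under countable intersections, and Hausdorffness together with $\chi(X) \leq \omega_1$ yields $\bigcap_{\alpha < \omega_1} \overline{W_\alpha} = \{x\}$. Moreover, in a $P$-space countable unions of closed sets are closed (equivalently, $F_\sigma$-sets are closed). These two facts will be used throughout.

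The construction produces, for $\alpha < \omega_1$, points $y_\alpha \in F$, pairwise disjoint open neighborhoods $N_\alpha \ni y_\alpha$, open neighborhoods $U_\alpha \ni x$ with $N_\alpha \cap U_\alpha = \emptyset$, and an increasing, cofinal sequence of indices $\gamma_\alpha < \omega_1$ satisfying $y_\alpha \in \overline{W_{\gamma_\alpha}}$. At stage $\alpha$, the countable intersection $U^\alpha := \bigcap_{\beta < \alpha} U_\beta$ is open (by the $P$-space axiom) and contains $x$, while $C^\alpha := \bigcup_{\beta < \alpha} \overline{N_\beta}$ is closed and misses $x$ (since each $\overline{N_\beta}$ is disjoint from $U_\beta \ni x$). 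Pick $\gamma_\alpha$ large enough that $W_{\gamma_\alpha} \subseteq U^\alpha \setminus C^\alpha$ and $\gamma_\alpha > \sup_{\beta < \alpha} \gamma_\beta$, choose $y_\alpha \in \overline{W_{\gamma_\alpha}} \cap F$ outside $C^\alpha$, and take $N_\alpha$ disjoint from $C^\alpha$ and from some open $U_\alpha \ni x$ (via Hausdorffness, using $y_\alpha \neq x$). Once the construction is completed, $D = \{y_\alpha : \alpha < \omega_1\}$ is strongly discrete, so by SDL the closure $Y := \overline{D}$ is Lindel\"of. Since $Y \subseteq F$, we have $x \notin Y$; Hausdorffness and $\bigcap \overline{W_\delta} = \{x\}$ imply that $\{X \setminus \overline{W_\delta} : \delta < \omega_1\}$ is an open cover of $Y$. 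A countable subcover extracts $\delta^* < \omega_1$ with $Y \cap \overline{W_{\delta^*}} = \emptyset$, contradicting $y_\alpha \in \overline{W_{\gamma_\alpha}} \subseteq \overline{W_{\delta^*}}$ for the cofinally many $\alpha$ with $\gamma_\alpha > \delta^*$.

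The main obstacle is justifying the inductive step, namely that $\overline{W_{\gamma_\alpha}} \cap F \not\subseteq C^\alpha$. Here one uses that $\theta$-closure commutes with countable unions in a $P$-space of character $\omega_1$ (a pigeonhole argument on the decreasing local base: for any countable collection $\{A_n\}$ with $x \in (\bigcup_n A_n)^\theta$ and any decreasing local base of length $\omega_1$, some fixed $A_{n^*}$ is witnessed cofinally in the base, and decreasingness then forces $x \in A_{n^*}^\theta$). Consequently, if the inclusion failed, $x$ would be $\theta$-close to $F \cap \overline{N_{\beta^*}}$ for some $\beta^* < \alpha$; a further analysis exploiting the disjointness $U_{\beta^*} \cap N_{\beta^*} = \emptyset$ and the shrinking neighborhoods of $x$ within $U_{\beta^*}$ either yields a direct contradiction or reduces the problem to a smaller instance to be iterated.
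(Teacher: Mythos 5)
Your overall strategy (a transfinite construction of an uncountable strongly discrete subset of $F$, followed by a contradiction from the Lindel\"ofness of its closure via the cover $\{X\setminus\overline{W_\delta}:\delta<\omega_1\}$) is reasonable in outline, and your closing argument is correct. But the gap is exactly where you locate it, and the sketch you give does not repair it. You need $y_\alpha\in(\overline{W_{\gamma_\alpha}}\cap F)\setminus C^\alpha$ with $C^\alpha=\bigcup_{\beta<\alpha}\overline{N_\beta}$. Choosing $W_{\gamma_\alpha}\subseteq U^\alpha\setminus C^\alpha$ only guarantees that the \emph{open set} $W_{\gamma_\alpha}$ misses $C^\alpha$; its closure need not, and since the whole point is that $X$ fails to be regular at $x$, it is entirely possible that $\overline{W_\gamma}\cap F\subseteq\bigcup_{\beta<\alpha}(\overline{N_\beta}\setminus N_\beta)$ for every large $\gamma$. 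Your pigeonhole step does correctly extract a $\beta^*$ with $x\in(F\cap\overline{N_{\beta^*}})^\theta$, but this is not a contradiction: from $U_{\beta^*}\cap N_{\beta^*}=\emptyset$ one gets $W_\delta\cap\overline{N_{\beta^*}}=\emptyset$ for $W_\delta\subseteq U_{\beta^*}$, yet $\overline{W_\delta}$ may still meet $\overline{N_{\beta^*}}$ --- precisely because closures of neighbourhoods of $x$ are not controlled in a non-regular space. The fallback ``reduces the problem to a smaller instance to be iterated'' is not an argument, and I see no way to force the $y_\alpha$ themselves to be strongly discrete.

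The paper sidesteps this obstacle by never asking the selected points of $F$ to be strongly discrete. It chooses $x_\delta\in\overline{U_{\alpha_\delta}}\cap F$ so that $x_\delta\in\overline{V_\delta}$, where the sets $V_\delta=U_{\alpha_\delta}\setminus\overline{U_{\alpha_{\delta+1}}}$ are pairwise disjoint open ``annuli'' around $p$; then, inside each $V_\delta$, it uses the $P$-space property to build a pairwise disjoint family $\{W^\delta_\alpha:\alpha<\omega_1\}$ of nonempty open sets with $W^\delta_\alpha\subseteq V_\delta\cap U^\delta_\alpha$, where $\{U^\delta_\alpha:\alpha<\omega_1\}$ is a local base at $x_\delta$. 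Selecting one point from each $W^\delta_\alpha$ yields a strongly discrete set $D$ whose closure contains every $x_\delta$ (each basic neighbourhood of $x_\delta$ meets $D$), and Lindel\"ofness of $\overline D$ then gives a complete accumulation point of $\{x_\delta:\delta<\omega_1\}$ that must equal $p$ and lie in $F$, a contradiction. In short, the strongly discrete set is an auxiliary cellular selection that \emph{captures} the chosen points of $F$ in its closure rather than consisting of them; your induction, as written, cannot be carried out without some such device.
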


\begin{proof}
Let $F$ be a closed set and $p\notin F$. Assume that $p$ is not isolated and fix a decreasing local base $\{U_\alpha :\alpha <\omega_1\}$ at $p$. 

It is enough to prove the existence of an ordinal $\gamma<\omega_1$ such that $\overline {U_\gamma}\cap F=\emptyset$. 

Suppose the contrary, set $\alpha _0=0$  and pick a point $x_0 \in \overline {U_{\alpha _0} }\cap F$.  Then, choose $\alpha _1$ such that $x_0\notin \overline {U_{\alpha _1}}$, put $V _0 =U_{\alpha_0}\setminus \overline {U_{\alpha _1}}$ and pick a point $x_1\in \overline {U_{\alpha _1}}\cap F$. Notice that $x_0\in \overline{V_0}$. Proceeding by induction, suppose that for some $\delta < \omega_1$ we have already chosen ordinals $\alpha _\xi$ and points $x_\xi$ for each $\xi<\delta$ in such a way that  $x_\xi\in \overline {V_\xi}\cap F$, where $V_\xi=U_{\alpha _\xi}\setminus \overline {U_{\alpha _\xi+1}}$.
To continue,  take $\alpha _\delta<\omega_1$ so that $x_\xi\notin \overline {U_{\alpha _\delta}}$ for each $\xi<\delta$  and pick $x_\delta\in \overline {U_{\alpha _\delta}}\cap F$, $\alpha_{\delta+1}>\alpha _\delta$ such that $x_\delta\notin \overline{U_{\alpha _\delta+1}}$ and set $V_\delta=U_{\alpha_\delta}\setminus \overline {U_{\alpha _\delta+1}}$.

At the end of this inductive process, for any $\delta<\omega_1$ fix a local base $\{U_\alpha ^\delta:\alpha <\omega_1\}$ at $x_\delta$. Take a non-empty open set $W_0^\delta\subseteq V_\delta \cap
U_0^\delta$ with $x_\delta \notin \overline {W^\delta_0}$. 
Assume we have already chosen non-empty pairwise disjoint open
sets $W^\delta_\beta\subseteq V_\delta\cap U_\beta^\delta$ with
$x_\delta\notin \overline {W_\beta^\delta}$ for each
$\beta<\alpha $. As $X$ is a P-space, the set $U^\delta_\alpha
\setminus \bigcup\{\overline {W_\beta^\delta}:\beta<\alpha \}$ is
open and non-empty. Therefore, there is a non-empty open set
$W_\alpha ^\delta\subseteq U_\alpha ^\delta\setminus
\bigcup\{\overline {W_ \beta ^\delta}:\beta<\alpha \}$ with
$x_\delta \notin \overline {W^\delta_\alpha }$.

The collection $\bigcup \{W^\delta_\alpha :\alpha ,\delta<\omega_1\}$ is a cellular family and hence we can obtain a strongly discrete set $D$ by selecting a point from each of its members. Since $X$ is an SDL space, the set $\overline D$ is
Lindel\"of. Moreover, since $\{U^\delta_\alpha :\alpha
<\omega_1\}$ is a local base at $x_\delta$ whose elements meet
$D$, we have $\{x_\delta:\delta<\omega_1\}\subseteq  \overline
D$. Thus, there exists a complete  accumulation point $z$ for the
set $\{x_\delta:\delta<\omega_1\}$. But then $z\in \bigcap
\{\overline {U_\alpha }:\alpha <\omega_1\}=\{p\}$ and hence
$z=p$. On the other hand,  from
$\{x_\delta:\delta<\omega_1\}\subseteq F$, it follows that $z\in F$, which is a contradiction.
\end{proof}

\begin{corollary}
If $X$ is a Hausdorff cellular-Lindel\"of P-space of character $\omega_1$, then $|X|\le 2^{\omega_1}$.
\end{corollary}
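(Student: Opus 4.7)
The plan is to verify that $X$ satisfies the hypotheses of Corollary~\ref{maincor}; since $\chi(X)=\omega_1$, the bound $|X|\le 2^{\omega_1}$ will follow at once.

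The only genuinely new ingredient is to promote cellular-Lindel\"ofness to the \emph{strongly} cellular-Lindel\"of form required by Lemma~\ref{SDLemma}. I would first establish the auxiliary fact that in a Hausdorff P-space every Lindel\"of subspace is closed. Suppose $L$ is Lindel\"of and $x\in\overline{L}\setminus L$; using Hausdorffness, for each $y\in L$ pick disjoint open sets $U_y\ni y$ and $V_y\ni x$. By Lindel\"ofness extract a countable subcover $\{U_{y_n}:n<\omega\}$ of $L$. Then $\bigcap_{n<\omega}V_{y_n}$ is open, by the P-space property, and disjoint from $\bigcup_{n<\omega}U_{y_n}\supseteq L$, contradicting $x\in\overline{L}$. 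Applying this to $X$, the Lindel\"of witnesses in the cellular-Lindel\"of definition are automatically closed, so $X$ is in fact strongly cellular-Lindel\"of.

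From here the remainder is an assembly of earlier results. Since $X$ is a Hausdorff P-space of character $\omega_1$, Lemma~\ref{FDLemma} supplies a disjoint local $\pi$-base at every point, and Lemma~\ref{SDLemma} then gives that $X$ is SDL. The preceding theorem upgrades $X$ to a regular space; combined with Hausdorffness this makes $X$ a $T_3$ space, hence Urysohn. Corollary~\ref{maincor} now applies and yields $|X|\le 2^{\chi(X)}=2^{\omega_1}$, as desired.

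The main obstacle is the auxiliary observation linking Lindel\"of subspaces to closed subspaces in the Hausdorff P-space setting; this is where both the Hausdorff hypothesis and the P-space hypothesis are used in an essential way to close the gap between the two cellular-Lindel\"of notions. Once that step is in place, the rest of the proof is a mechanical concatenation of Lemmas~\ref{FDLemma} and \ref{SDLemma}, the regularity theorem, and Corollary~\ref{maincor}.
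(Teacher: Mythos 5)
Your proof is correct and follows exactly the route the paper intends (the paper states this corollary without proof, as an assembly of Lemmas~\ref{SDLemma} and \ref{FDLemma}, the regularity theorem, and Corollary~\ref{maincor}). Your auxiliary observation that Lindel\"of subspaces of a Hausdorff P-space are closed is the right way to bridge the gap from cellular-Lindel\"of to strongly cellular-Lindel\"of, a step the paper leaves implicit.
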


We finish with one more cardinal bound for SDL spaces which satisfy a mild generalized metric property, that of having a $G_\delta$ diagonal of rank 2. Given a family $\mathcal{U}$ of subsets of $X$ and a subset $F \subset X$ we define $St(F, \mathcal{U})=\bigcup \{U \in \mathcal{U}: F \cap U \neq \emptyset\}$. We also define $St(x, \mathcal{U})=St(\{x\}, \mathcal{U})$. Given a point $x \in X$ we define $St^n(x, \mathcal{U})$ by induction as follows: $St^1(x, \mathcal{U})=St(x, \mathcal{U})$ and $St^n(x, \mathcal{U})=St(St^{n-1}(x, \mathcal{U}), \mathcal{U})$ for every $n>1$. A space $X$ is said to have a $G_\delta$ diagonal of rank n if there is a sequence $\{\mathcal{U}_k: k < \omega \}$ of open covers of $X$ such that, for every $x \in X$, $\bigcap \{St^n(x, \mathcal{U}_k): k < \omega \}=\{x\}$.

Recall the following fundamental result in infinite combinatorics, a proof of which can be found, for example in \cite{Je}, Theorem 9.6.

\begin{theorem}
(The Erd\H{o}s-Rado Theorem) Let $X$ be a set of cardinality larger than continuum and $f: [X]^2 \to \omega$ be a function. Then there is an uncountable subset $S \subset X$ and an integer $m<\omega$ such that $f([S]^2)=\{m\}$.
\end{theorem}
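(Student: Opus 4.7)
The plan is to reduce the statement to the construction of an \emph{end-homogeneous} sequence and then finish by pigeonhole. Without loss of generality I assume $|X| = \lambda := (2^{\aleph_0})^+$, which is a regular uncountable cardinal; any larger $X$ contains a subset of this size, and a monochromatic subset of the smaller set works for the larger set as well.

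I would construct by transfinite recursion a one-to-one sequence $\langle x_\alpha : \alpha < \omega_1\rangle$ in $X$ together with a function $g : \omega_1 \to \omega$ such that $f(\{x_\alpha, x_\beta\}) = g(\alpha)$ for all $\alpha < \beta < \omega_1$. Once such an end-homogeneous sequence exists, $g$ maps the uncountable set $\omega_1$ into the countable set $\omega$, so pigeonhole yields an uncountable $T \subseteq \omega_1$ on which $g$ is constantly equal to some $m < \omega$, and $S = \{x_\alpha : \alpha \in T\}$ is then an uncountable set with $f([S]^2) = \{m\}$.

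For the recursion, the idea is to maintain a decreasing chain of "reserves" $\langle Y_\alpha : \alpha < \omega_1\rangle$, each of cardinality $\lambda$, with $Y_0 = X$ and $x_\alpha \in Y_\alpha$, and with the invariant that $f(\{x_\beta, y\}) = g(\beta)$ for every $\beta < \alpha$ and every $y \in Y_\alpha$. At a successor stage, splitting $Y_\alpha \setminus \{x_\alpha\}$ into the $\omega$ colour classes of $f(\{x_\alpha, \cdot\})$ and invoking the regularity of $\lambda$ (which has uncountable cofinality) lets me choose $g(\alpha)$ so that the corresponding class $Y_{\alpha+1}$ retains cardinality $\lambda$.

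The main obstacle is the countable-limit stage, where the reserve $Y_\alpha := \bigcap_{\beta < \alpha} Y_\beta$ must still be of size $\lambda$ for the recursion to continue. The key observation is that the elements of $X \setminus \{x_\beta : \beta < \alpha\}$ are classified by their $\omega^\alpha$-valued \emph{type} $\langle f(\{x_\beta, y\}) : \beta < \alpha\rangle$, and $|\omega^\alpha| \leq 2^{\aleph_0} < \lambda$ whenever $\alpha$ is countable. Since $\lambda$ is regular and strictly larger than the number of possible types, pigeonhole forces at least one type-class to have cardinality $\lambda$. The successor-step choices of $g(\alpha)$ must therefore be made \emph{coherently}, following a wide branch of the induced "tree of types" so that the specific type-class indexed by $\langle g(\beta) : \beta < \alpha\rangle$ has size $\lambda$ at each limit $\alpha$. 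Engineering this coherent choice is the technical heart of the argument, and the full combinatorial bookkeeping may be found in \cite{Je}, Theorem 9.6.
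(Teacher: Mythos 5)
The paper does not actually prove this statement: it is the classical Erd\H{o}s--Rado relation $(2^{\aleph_0})^+\to(\aleph_1)^2_{\aleph_0}$, quoted with a pointer to \cite{Je}, Theorem 9.6, so the only question is whether your sketch stands on its own. Its skeleton is the standard one and is correct as far as it goes: reduce to $|X|=\lambda=(2^{\aleph_0})^+$, build an end-homogeneous $\omega_1$-sequence, and finish by pigeonhole on $g:\omega_1\to\omega$; the successor step using regularity of $\lambda$ is also fine, and you correctly diagnose that the naive limit step fails (a decreasing $\omega$-chain of $\lambda$-sized reserves can have small, even empty, intersection). The problem is that your proposed repair is circular as stated. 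At a countable limit $\alpha$ the counting $|\omega^\alpha|\le 2^{\aleph_0}<\lambda$ only guarantees that \emph{some} type class has $\lambda$ realizers, not the class indexed by the values $\langle g(\beta):\beta<\alpha\rangle$ to which you have already committed, and a greedy successor-by-successor choice cannot anticipate every future limit. The ``coherent choice along a wide branch'' that you defer to the citation is not bookkeeping; it is the entire content of the theorem, so as a self-contained argument the proposal has a genuine gap at exactly the step it needs most.

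Either of two standard devices closes it. (Ramification.) Do not fix a branch in advance: well-order $X$ and sort every $y\in X$ into a tree, declaring the predecessors of $y$ to be the recursively chosen earlier points $x^y_\xi$, where $x^y_\eta$ is the least point distinct from $y$ realizing the same $f$-type as $y$ over $\{x^y_\xi:\xi<\eta\}$. Distinct points with the same predecessor set would each have to precede the other, so $y\mapsto\langle f(\{x^y_\xi,y\}):\xi\rangle$ is injective on points of countable height; if every $y$ had countable height this would give $|X|\le|\omega^{<\omega_1}|=2^{\aleph_0}<\lambda$, a contradiction, so some $y$ has $\omega_1$ predecessors and these form the end-homogeneous set. (Anchor point.) Alternatively, take a countably closed elementary submodel $M\prec H(\theta)$ with $f,X\in M$ and $|M|=2^{\aleph_0}$, fix $a\in X\setminus M$, and recursively pick $x_\alpha\in X\cap M$ realizing the type of $a$ over $\{x_\beta:\beta<\alpha\}$; this type lies in $M$ by countable closure and is realized by $a\notin M$, hence by uncountably many points, so a fresh $x_\alpha$ exists, and setting $g(\alpha)=f(\{x_\alpha,a\})$ makes your reserve automatically nonempty at every limit because $a$ always belongs to it. Incorporating either device would turn your outline into a complete proof; without one of them, you have restated the difficulty rather than resolved it.
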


\begin{theorem} \label{rank2}
Every SDL space with a $G_\delta$ diagonal of rank $2$ has cardinality at most $\mathfrak{c}$.
\end{theorem}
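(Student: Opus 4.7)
The plan is to argue by contradiction: suppose $|X| > \mathfrak{c}$ and produce an uncountable strongly discrete set whose Lindelöf closure hosts a complete accumulation point incompatible with the rank-$2$ diagonal structure.

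Fix open covers $\{\mathcal{U}_n : n < \omega\}$ witnessing the rank-$2$ $G_\delta$-diagonal, so that $\bigcap_n St^2(x, \mathcal{U}_n) = \{x\}$ for every $x \in X$. Observe that the relation ``$y \notin St^2(x, \mathcal{U}_n)$'' is symmetric in $x$ and $y$, since it is equivalent to saying that no element of $\mathcal{U}_n$ containing $x$ meets any element containing $y$. This lets me define a well-defined symmetric coloring $f : [X]^2 \to \omega$ by $f(\{x,y\}) = \min\{n : y \notin St^2(x, \mathcal{U}_n)\}$. Since $|X| > 2^{\aleph_0}$, the Erd\H{o}s-Rado theorem produces an uncountable set $S \subseteq X$ and an integer $m < \omega$ with $f([S]^2) = \{m\}$.

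The central observation is that this homogeneity forces every $V \in \mathcal{U}_m$ to contain at most one point of $S$. Indeed, if $V$ contained distinct $x, y \in S$, then $V \subseteq St(x, \mathcal{U}_m)$ would give $y \in St^2(x, \mathcal{U}_m)$, contradicting the color $m$. In particular, picking any $U_x \in \mathcal{U}_m$ containing $x$ yields a pairwise disjoint open expansion of $S$, so $S$ is strongly discrete; by the SDL hypothesis, $\overline{S}$ is Lindelöf.

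The final step, which I expect to be the crux of the argument, is to exhibit a complete accumulation point $p \in \overline{S}$ of $S$. This follows from a standard open-cover argument inside the Lindelöf space $\overline{S}$: were each $y \in \overline{S}$ to have a neighborhood meeting $S$ in only countably many points, a countable subcover would force $|S| \leq \aleph_0$, contrary to $|S| = \aleph_1$. Given such a $p$, choose $V \in \mathcal{U}_m$ with $p \in V$ (possible because $\mathcal{U}_m$ is a cover of $X$). Then $V \cap \overline{S}$ is an open neighborhood of $p$ in $\overline{S}$ meeting $S$ in at most one point, contradicting the complete-accumulation property. The contradiction forces $|X| \leq \mathfrak{c}$, as desired.
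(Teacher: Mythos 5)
Your proposal is correct and follows essentially the same route as the paper's proof: the same Erd\H{o}s--Rado coloring extracted from the rank-$2$ diagonal, the same homogeneous uncountable set $S$ made strongly discrete by members of $\mathcal{U}_m$, and the same use of the SDL hypothesis on $\overline{S}$. One phrasing quibble: pairwise disjointness of the chosen $U_x$ does not follow (``in particular'') from the fact that each $V \in \mathcal{U}_m$ meets $S$ in at most one point --- two such $V$'s could still overlap off $S$ --- but it does follow immediately from the homogeneity itself, since $U_x \cap U_y \neq \emptyset$ would put $y \in St^2(x,\mathcal{U}_m)$; with that justification substituted, your argument is sound, and your endgame (a complete accumulation point of $S$ in the Lindel\"of $\overline{S}$ versus the paper's observation that $S$ is closed and discrete) is only a cosmetic variation.
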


\begin{proof}
Let $\{\mathcal{U}_k: k < \omega \}$ be a sequence of open covers witnessing a $G_\delta$ diagonal of rank 2. Let $x, y \in X$ be distinct points. Then there is $n<\omega$ such that $x \notin St^2(y, \mathcal{U}_n)=St(St(y, \mathcal{U}_n), \mathcal{U}_n)$. Therefore, for every $U \in \mathcal{U}_n$ such that $x \in U$ we have $U \cap St(y, \mathcal{U}_n)=\emptyset$. But $St(x, \mathcal{U}_n)=\bigcup \{U \in \mathcal{U}_n: x \in U\}$ and hence $St(x, \mathcal{U}_n) \cap St(y, \mathcal{U}_n)=\emptyset$.

Suppose by contradiction that $|X| > \mathfrak{c}$. For every pair of distinct points $x, y \in X$ let $f(\{x,y\})=\min\{n<\omega: St(x, \mathcal{U}_n) \cap St(y, \mathcal{U}_n)=\emptyset\}$. Then $f$ is a map from $[X]^2$ to $\omega$ and hence, by the Erd\H{o}s-Rado theorem there is an uncountable set $S \subset X$ and a number $m< \omega$ such that $f([S]^2)=\{m\}$. But then $\{St(x, \mathcal{U}_m): x \in S \}$ is a cellular family and hence $S$ is a strongly discrete set. Moreover, $S$ is closed. Indeed, let $p \in X \setminus S$. Then there is $U \in \mathcal{U}_m$ such that $p \in U$. Note that by definition of $S$ we have $|U \cap S| \leq 1$, and that shows that $p$ is not an accumulation point for $S$. But an SDL space cannot have an uncountable strongly discrete closed subset and we are done.
\end{proof}

Note that in the above theorem the SDL property can be replaced with the property that every strongly discrete closed subset of $X$ is countable. Given that every SDL space is strongly cellular-Lindel\"of it is natural to ask the following question.

\begin{question}
Is it true that every strongly cellular-Lindel\"of space with a $G_\delta$ diagonal of rank 2 has cardinality at most $\mathfrak{c}$?
\end{question}

We recall that the following question is also open. In \cite{BS2} we proved that it is consistent that every normal cellular-Lindel\"of space with a $G_\delta$ diagonal of rank 2 has cardinality at most continuum and a few more partial answers were given by Xuan and Song in \cite{XS2}.

\begin{question}
Is it consistent that every cellular-Lindel\"of space with a $G_\delta$ diagonal of rank 2 has cardinality at most $\mathfrak{c}$?
\end{question}

A classical result due to Ginsburg and Grant Woods \cite{GW} states that a space with a $G_\delta$ diagonal containing no uncountable closed discrete subsets has cardinality at most continuum. Discrete cannot be replaced with strongly discrete in Ginsburg and Grant Woods's result since there are ccc spaces with a $G_\delta$ diagonal and arbitrarily large cardinality (see \cite{Sh} and \cite{U}). However, the following question seems to be open.

\begin{question}
Is it true that every SDL space with a $G_\delta$ diagonal has cardinality at most continuum?
\end{question}

\section{Acknowledgements}

The authors are grateful to INdAM-GNSAGA for financial support and to the anonymous referee for various helpful suggestions.

\end{document}